\theoremstyle{theorem}
\newtheorem{thm}{Theorem}[section]
\theoremstyle{definition}
\newtheorem{defn}[thm]{Definition}
\newtheorem{ex}[thm]{Example}
\theoremstyle{remark}
\numberwithin{equation}{section}
\begin{document}

\title{ On the Generalized Arcsine Probability Distribution with Bounded Support}%
\author{Rami AlAhmad\\Dept. of Mathematics, Yarmouk University\\ Irbid, JORDAN -- 21163\\e-mail: rami\_thenat@yu.edu.jo}
\date{\today}%

\maketitle

\begin{abstract}
Properties of  the beta functions are investigated. We define the generalized arcsine probability distribution with bounded support. The properties of the beta functions prove some results for this distribution.
\end{abstract}
{\bf AMS Subject Classification}:33B15.\\
{\bf Keywords}: beta functions, beta probability distribution, the generalized arcsine probability distribution  .
\section{Introduction}The arcsine distribution plays main role in many fields. For example, the arcsine laws are a collection of results for one-dimensional random walks and Brownian motion (the Wiener process, see~\cite{Morters}). All three laws relate path properties of the Wiener process to the arcsine distribution.In particular, the third arcsine law states that the time at which a Wiener process achieves its maximum is arcsine distributed.\begin{defn}A random variable $X$ has the standard arcsine distribution if $X$ has probability density function $f$ given by
$$f(x)=\frac{1}{\sqrt{x(1-x)}},x\in(0,1).$$\end{defn}

\begin{defn}The beta function is defined as
 $$\beta(s,t) = \int_0^1 u^{s-1}(1-u)^{t-1}du, $$ for $\Re(s)>0$ and $\Re(t)>0$.
 \end{defn}\begin{thm}The central moments of a random variable $X$ with the standard arcsine distribution are $$\mu_k=\left\{
                                            \begin{array}{ll}
                                              0, & \hbox{k=2n-1;} \\
                                              \frac{(2n)!}{16^n(n!)^2} &\hbox{k=2n.}
                                            \end{array}
                                          \right. $$ where $n\in \textbf{N}.$

 \end{thm}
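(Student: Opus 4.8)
\section*{Proof proposal}

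The plan is to exploit the symmetry of the arcsine density and then reduce the even moments to a single value of the beta function. First I would record that $f$ is symmetric about $x=\tfrac12$, since $f\!\left(\tfrac12+t\right)=f\!\left(\tfrac12-t\right)$ for $|t|<\tfrac12$; in particular $E[X]=\tfrac12$, so that (with the normalization $1/\beta(\tfrac12,\tfrac12)=1/\pi$ of the density understood)
\[
\mu_k=\frac{1}{\pi}\int_0^1\Big(x-\tfrac12\Big)^k\frac{dx}{\sqrt{x(1-x)}} .
\]
The substitution $t=x-\tfrac12$, together with the identity $x(1-x)=\tfrac14-t^2$, turns this into $\mu_k=\frac{1}{\pi}\int_{-1/2}^{1/2}t^k\big(\tfrac14-t^2\big)^{-1/2}\,dt$. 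For $k=2n-1$ the integrand is an odd function integrated over a symmetric interval, so $\mu_{2n-1}=0$ immediately.

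For $k=2n$ the integrand is even, hence $\mu_{2n}=\dfrac{2}{\pi}\displaystyle\int_0^{1/2}t^{2n}\big(\tfrac14-t^2\big)^{-1/2}\,dt$. I would then substitute $u=4t^2$ (so $t=\tfrac12\sqrt u$, $dt=\tfrac14 u^{-1/2}\,du$, and $\tfrac14-t^2=\tfrac14(1-u)$), which collapses the integral into a beta integral:
\[
\mu_{2n}=\frac{1}{\pi\,4^{n}}\int_0^1 u^{\,n-1/2}(1-u)^{-1/2}\,du=\frac{1}{\pi\,4^{n}}\,\beta\!\left(n+\tfrac12,\tfrac12\right).
\]

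It then remains to evaluate $\beta\!\left(n+\tfrac12,\tfrac12\right)$ in closed form, which I expect to be the only genuinely substantive step and which should be available from the properties of the beta function developed earlier in the paper: writing $\beta\!\left(n+\tfrac12,\tfrac12\right)=\Gamma\!\left(n+\tfrac12\right)\Gamma\!\left(\tfrac12\right)/\Gamma(n+1)$ and using $\Gamma\!\left(\tfrac12\right)=\sqrt\pi$ together with the half-integer identity $\Gamma\!\left(n+\tfrac12\right)=\dfrac{(2n)!}{4^{n}n!}\sqrt\pi$ gives $\beta\!\left(n+\tfrac12,\tfrac12\right)=\dfrac{(2n)!\,\pi}{4^{n}(n!)^2}$. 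Substituting back, the factor $\pi$ cancels and
\[
\mu_{2n}=\frac{1}{\pi\,4^{n}}\cdot\frac{(2n)!\,\pi}{4^{n}(n!)^2}=\frac{(2n)!}{16^{n}(n!)^2},
\]
as claimed. (An equivalent route avoiding the explicit $u=4t^2$ step is the substitution $t=\tfrac12\sin\theta$, reducing $\mu_{2n}$ to $\tfrac{2}{\pi}4^{-n}\int_0^{\pi/2}\sin^{2n}\theta\,d\theta$ and then invoking Wallis' formula; the factorial bookkeeping is the same.)
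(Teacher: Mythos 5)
Your proof is correct and follows essentially the same route as the paper: the paper obtains this statement as the special case $s=\tfrac12$, $r_1=0$, $r_2=1$ of its general Theorem~1.5, whose proof likewise centers at the midpoint, kills the odd moments by the resulting sign symmetry (your parity argument appears there as the factor $(-1)^k+1$), and reduces the even moments to a beta value via the quadratic substitution $w=\bigl(2(x-\overline{r})/(r_2-r_1)\bigr)^2$ — exactly your $u=4t^2$ — before finishing with $\Gamma\!\left(n+\tfrac12\right)=(2n)!\sqrt{\pi}/(4^n n!)$. The only differences are organizational: you argue directly in the standard case rather than through the general parametric theorem, and you explicitly supply the $1/\pi$ normalization of the density, which the paper's Definition~1.1 omits.
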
 This Theorem is a special case of Theorem~\ref{5}. 

\section{Additional properties of beta functions}
\begin{thm}\label{1} For $\Re(s)>0$, $\Re(t)>0$, and any two real numbers $r_1< r_2$. The beta function is give as
$$\beta(s,t)=\frac{1}{(r_2-r_1)^{s+t-1}}\int_{r_1}^{r_2}(x-r_1)^{s-1}(r_2-x)^{t-1}dx$$\end{thm}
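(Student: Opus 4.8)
The plan is to obtain the identity by a single affine change of variables in the defining integral for $\beta(s,t)$. Starting from
$$\beta(s,t)=\int_0^1 u^{s-1}(1-u)^{t-1}\,du,$$
I would introduce the new variable $x = r_1 + (r_2-r_1)u$, which maps the interval $u\in(0,1)$ bijectively and increasingly onto $x\in(r_1,r_2)$ since $r_2-r_1>0$. Then $u=\dfrac{x-r_1}{r_2-r_1}$, $1-u=\dfrac{r_2-x}{r_2-r_1}$, and $du=\dfrac{dx}{r_2-r_1}$.

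Substituting these expressions into the integral gives
$$\beta(s,t)=\int_{r_1}^{r_2}\left(\frac{x-r_1}{r_2-r_1}\right)^{s-1}\left(\frac{r_2-x}{r_2-r_1}\right)^{t-1}\frac{dx}{r_2-r_1},$$
and pulling the constant factors $(r_2-r_1)^{-(s-1)}$, $(r_2-r_1)^{-(t-1)}$ and $(r_2-r_1)^{-1}$ out of the integral collapses them to the single factor $(r_2-r_1)^{-(s+t-1)}$, which is exactly the claimed formula. I would note that on $(r_1,r_2)$ both $x-r_1$ and $r_2-x$ are strictly positive, so the powers $(x-r_1)^{s-1}$ and $(r_2-x)^{t-1}$ are unambiguously defined (via the real positive base), and the resulting integral converges by the same argument that makes $\beta(s,t)$ converge for $\Re(s)>0$, $\Re(t)>0$.

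There is essentially no hard step here: the only point requiring a word of care is that $s$ and $t$ may be complex, so the substitution is a change of variables in a real one-dimensional integral of a complex-valued function; this is legitimate because the integrand factors as a real Jacobian times a continuous complex-valued function of the real variable, and linearity of the integral in its (complex) integrand lets the argument proceed verbatim as in the real case. I would close by remarking that taking $r_1=0$, $r_2=1$ recovers the original definition, confirming consistency.
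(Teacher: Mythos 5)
Your proof is correct and is essentially the same as the paper's: both use the affine substitution $u=\tfrac{x-r_1}{r_2-r_1}$ (you merely apply it in the reverse direction, starting from the unit-interval integral rather than from the shifted one). The extra remarks on positivity of the bases and on complex exponents are fine but not needed beyond what the paper does.
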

\begin{proof}Using the substitution $w=\frac{x-r_1}{r_2-r_1}$ to get that
 \begin{align*}\int_{r_1}^{r_2}(x-r_1)^{s-1}(r_2-x)^{t-1}dx&=\int_{0}^{1}((r_2-r_1)w)^{s-1}(r_2-r_1-(r_2-r_1)w)^{t-1}(r_2-r_1)dw\\
 \\&=(r_2-r_1)^{s+t-1}\int_{0}^{1}w^{s-1}(1-w)^{t-1}dw\\&=(r_2-r_1)^{s+t-1}\beta(s,t).
 \end{align*}\end{proof}
 Moreover, the beta functions satisfy
\begin{thm}\label{2}For $r_1<\frac{r_1+r_2}{2}=\overline{r}< r_2$, $\Re(s)>0$, and $\Re(t)>0$
$$\int_{\overline{r}}^{r_2}(x-r_1)^{s-1}(x-\overline{r})^{t-1}(r_2-x)^{s-1}dx=\frac{1}{2}(\frac{r_2-r_1}{2})^{2s+t-2}\beta(s,\frac{t}{2}).$$
Moreover,
$$\int_{r_1}^{\overline{r}}(x-r_1)^{s-1}(\overline{r}-x)^{t-1}(r_2-x)^{s-1}dx=\frac{1}{2}(\frac{r_2-r_1}{2})^{2s+t-2}\beta(s,\frac{t}{2}).$$
In particular, the beta function can be defined as $$\beta(s,t)=\int_0^{1/2}x^{s-1}(\frac{1}{2}-x)^{2t-1}(1-x)^{s-1}=\int_{1/2}^1x^{s-1}(x-\frac{1}{2})^{2t-1}(1-x)^{s-1}.$$
\end{thm}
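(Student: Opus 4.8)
The plan is to collapse both integrals to a single, symmetric one by translating the interval so that it is centred at $\overline r$, then to apply the quadratic substitution $u=y^2$ to convert the resulting integrand into a genuine beta integrand, and finally to quote Theorem~\ref{1}.

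First I would set $h=\frac{r_2-r_1}{2}$ and substitute $x=\overline r+y$ in the first integral, so that $y$ runs over $[0,h]$. Since $x-r_1=h+y$, $r_2-x=h-y$, and $x-\overline r=y$, the two factors with exponent $s-1$ multiply to $(h+y)^{s-1}(h-y)^{s-1}=(h^2-y^2)^{s-1}$, and the first integral becomes $\int_0^h (h^2-y^2)^{s-1}y^{t-1}\,dy$. For the second integral I would either repeat the argument with $x=\overline r-y$, which produces the very same expression, or simply observe that the map $x\mapsto r_1+r_2-x$ is an involution of $[r_1,r_2]$ fixing $\overline r$, interchanging $x-r_1$ with $r_2-x$ and $x-\overline r$ with $\overline r-x$; hence it carries the second integral onto the first, and the two are automatically equal. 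So it suffices to evaluate $\int_0^h (h^2-y^2)^{s-1}y^{t-1}\,dy$.

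Next I would put $u=y^2$, so $dy=\frac12 u^{-1/2}\,du$ and $y^{t-1}=u^{(t-1)/2}$, giving $\frac12\int_0^{h^2}(h^2-u)^{s-1}u^{t/2-1}\,du$; the hypotheses $\Re(s)>0$ and $\Re(t)>0$ guarantee absolute convergence at $u=h^2$ and $u=0$ respectively. This last integral is exactly of the shape treated in Theorem~\ref{1}, with lower limit $0$, upper limit $h^2$, and parameters $(t/2,\,s)$ in the roles of $(s,\,t)$; hence it equals $(h^2)^{t/2+s-1}\beta\!\left(\tfrac t2,s\right)=h^{2s+t-2}\beta\!\left(s,\tfrac t2\right)$ after using symmetry of $\beta$. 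Restoring the factor $\frac12$ and recalling $h=\frac{r_2-r_1}{2}$ yields $\frac12\left(\frac{r_2-r_1}{2}\right)^{2s+t-2}\beta\!\left(s,\tfrac t2\right)$ for both integrals, as claimed.

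Finally, the ``in particular'' identities follow by specializing to $r_1=0$, $r_2=1$ (so $\overline r=\tfrac12=h$) and relabelling the exponent parameter, moving the resulting power of $2$ across the equality. I do not expect a real obstacle: the argument is a chain of substitutions, and the only slightly non-obvious move is the quadratic substitution $u=y^2$, which is precisely what turns the even combination $(h^2-y^2)^{s-1}y^{t-1}$ into a beta integrand; the main thing to be careful about is the bookkeeping of exponents through the two changes of variable and the correct matching of parameters when citing Theorem~\ref{1}.
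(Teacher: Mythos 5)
Your proposal is correct and follows essentially the same route as the paper: your composite substitution $x=\overline r+y$ followed by $u=y^2$ is exactly the paper's single substitution $w=\bigl(\tfrac{2(x-\overline r)}{r_2-r_1}\bigr)^2$ (with the rescaling to $[0,h^2]$ harmlessly absorbed by citing Theorem~\ref{1}), and your reflection $x\mapsto r_1+r_2-x$ is the paper's symmetric treatment of the second integral. Your parenthetical remark that a power of $2$ must be ``moved across the equality'' in the final identity is well taken, since as printed that identity omits the factor $2^{2s+2t-1}$ that your computation (correctly) produces.
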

\begin{proof}Using the substitution $w=\left(\frac{2(x-\overline{r})}{r_2-r_1}\right)^2$ and the fact that $$\overline{r}-r_1=r_2-\overline{r}=\frac{r_2-r_1}{2}$$ to get that
 \begin{align*}&\int_{\overline{r}}^{r_2}(x-r_1)^{s-1}(x-\overline{r})^{t-1}(r_2-x)^{s-1}dx\\&=\int_{0}^{1}(\overline{r}-r_1+\frac{r_2-r_1}{2}\sqrt{w})^{s-1}(\frac{r_2-r_1}{2}\sqrt{w})^{t-1}(r_2-\overline{r}-\frac{r_2-r_1}{2}\sqrt{w})^{s-1}dw\\
&=\frac{1}{2}(\frac{r_2-r_1}{2})^{2s+t-2}\int_{0}^{1}(1-w)^{s-1}w^{\frac{t}{2}-1}dw=\frac{1}{2}(\frac{r_2-r_1}{2})^{2s+t-2}\beta(s,\frac{t}{2}).
 \end{align*}Now, following the same proof with $\sqrt{w}=\frac{2(\overline{r}-x)}{r_2-r_1}$ to prove the assertion $$\int_{r_1}^{\overline{r}}(x-r_1)^{s-1}(\overline{r}-x)^{t-1}(r_2-x)^{s-1}dx=\frac{1}{2}(\frac{r_2-r_1}{2})^{2s+t-2}\beta(s,\frac{t}{2}).$$
\end{proof}\begin{ex}using $r_1=1$ , $r_2=3$, $s=t=3/2$ and $\overline{r}=2$ to get that \begin{align*}\int_2^3\sqrt{6+6x^2-11x-x^3}dx&=\int_2^3\sqrt{(3-x)(x-2)(x-1)}dx\\&=\frac{1}{2}\beta(3/2, 3/4)\cong0.47925609389.\end{align*}

Also,  using $r_1=-1$ , $r_2=3$, $t=7$, $s=1/2$ and $\overline{r}=1$ to get that $$\int_{-1}^1\frac{(1-x)^6}{\sqrt{3+2x-x^2}}dx
=\int_{-1}^1(1-x)^{6}(3-x)^{-1/2}(1+x)^{-1/2}dx=\frac{1}{2}(2)^{6}\beta(1/2,7/2)=10\pi$$
\end{ex}

\subsection{The generalized arcsine probability distribution with bounded support}

Using Theorem~\ref{1}, we can easily show that  for $s,t >0$, the function $$f(x)=\frac{(x-r_1)^{s-1}(r_2-x)^{t-1}}{(r_2-r_1)^{s+t-1}\beta(s,t)}, r_1<x<r_2,$$ and zero elsewhere is probability density function. Moreover, this Theorem implies that the mean of the random variable $X$ is
\begin{align*}\mu=E(X)&=\int_{r_1}^{r_2}x\frac{(x-r_1)^{s-1}(r_2-x)^{t-1}}{(r_2-r_1)^{s+t-1}\beta(s,t)}dx\\&
=\int_{r_1}^{r_2}\frac{(x-r_1)^{s}(r_2-x)^{t-1}}{(r_2-r_1)^{s+t-1}\beta(s,t)}dx\\&+r_1\int_{r_1}^{r_2}\frac{(x-r_1)^{s-1}(r_2-x)^{t-1}}{(r_2-r_1)^{s+t-1}\beta(s,t)}dx
=r_1+\frac{s}{s+t}(r_2-r_1).\end{align*} Consider the following special case when $s=t$
\begin{defn} The random variable $X$ has is a generalized arcsine random variable if the probability density function is support$$f(x)=\frac{{((x-r_1)(r_2-x))}^{s-1}}{(r_2-r_1)^{2s-1}\beta(s,s)}, r_1<x<r_2.$$\end{defn} The above argument shows that this function is a valid density function. Moreover, it implies that in this case, when $s=t$, $\mu=\frac{r_2+r_1}{2}=\overline{r}$.

 As a special case, if $s=1/2$ then this distribution is called the arcsine probability distribution with bounded support ( see~\cite{Rogo}).  $s=1/2$ and $r_1=0$, $r_2=1$. Then $X$ has the standard arcsine distribution.
\begin{thm}\label{5}The central moments of a random variable $X$ with the standard arcsine distribution are $$\mu_k=\frac{((-1)^k+1)(r_2-r_1)^k\beta(s,\frac{k+1}{2})}{2^{2s+k}\beta(s,s)}=\left\{
                                            \begin{array}{ll}
                                              0, & \hbox{k=2n-1;} \\
                                              \frac{(r_2-r_1)^{2n}\beta(s,n+\frac{1}{2})}{2^{2s+2n-1}\beta(s,s)} &\hbox{k=2n.}
                                            \end{array}
                                          \right. $$ where $n\in \textbf{N}.$

 \end{thm}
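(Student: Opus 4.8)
The plan is to compute the central moment $\mu_k=E\bigl[(X-\mu)^k\bigr]$ directly from the density, using the fact established just above that in the case $s=t$ the mean is $\mu=\overline{r}=\frac{r_1+r_2}{2}$. Writing out the expectation,
$$\mu_k=\frac{1}{(r_2-r_1)^{2s-1}\beta(s,s)}\int_{r_1}^{r_2}(x-\overline{r})^k\bigl((x-r_1)(r_2-x)\bigr)^{s-1}\,dx,$$
I would split the integral at the midpoint $\overline{r}$ into $I_+=\int_{\overline{r}}^{r_2}$ and $I_-=\int_{r_1}^{\overline{r}}$, since the two halves are precisely the integrals evaluated in Theorem~\ref{2}.

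On $[\overline{r},r_2]$ the integrand is $(x-r_1)^{s-1}(x-\overline{r})^{k}(r_2-x)^{s-1}$, which matches the first identity of Theorem~\ref{2} with the parameter choice $t=k+1$ (so that the middle exponent $t-1$ equals $k$); this gives
$$I_+=\frac{1}{2}\left(\frac{r_2-r_1}{2}\right)^{2s+k-1}\beta\!\left(s,\frac{k+1}{2}\right).$$
For $I_-$ I would first write $(x-\overline{r})^k=(-1)^k(\overline{r}-x)^k$ on $[r_1,\overline{r}]$, so that the remaining factor $(x-r_1)^{s-1}(\overline{r}-x)^{k}(r_2-x)^{s-1}$ falls under the second identity of Theorem~\ref{2} (again with $t=k+1$), giving $I_-=(-1)^k\cdot\frac{1}{2}\bigl(\frac{r_2-r_1}{2}\bigr)^{2s+k-1}\beta\bigl(s,\frac{k+1}{2}\bigr)$. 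Adding $I_++I_-$ produces the factor $(-1)^k+1$, and after dividing by $(r_2-r_1)^{2s-1}\beta(s,s)$ and simplifying the powers of $2$ and of $r_2-r_1$ one obtains
$$\mu_k=\frac{((-1)^k+1)(r_2-r_1)^k\,\beta\!\left(s,\frac{k+1}{2}\right)}{2^{2s+k}\,\beta(s,s)}.$$
The case distinction is then immediate: for odd $k=2n-1$ the factor $(-1)^k+1$ vanishes, while for even $k=2n$ it equals $2$, which cancels one power of $2$ in the denominator and yields the stated closed form with $\beta\bigl(s,n+\frac12\bigr)$.

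I do not expect a serious obstacle: the argument is essentially an application of Theorem~\ref{2} to each half of the support plus elementary bookkeeping. The only points needing care are the identification $t=k+1$ that aligns the exponents with Theorem~\ref{2}, and the sign $(-1)^k$ coming from reflecting $x-\overline{r}$ on the left half — it is exactly this sign that forces every odd central moment to vanish, which is of course consistent with the symmetry of the density about $\overline{r}$. As a consistency check one can specialize to $s=\tfrac12$, $r_1=0$, $r_2=1$ and use $\beta(\tfrac12,\tfrac12)=\pi$ together with $\beta(\tfrac12,n+\tfrac12)=\pi(2n)!/(4^n(n!)^2)$ to recover $\mu_{2n}=(2n)!/(16^n(n!)^2)$, i.e.\ the central-moment formula stated for the standard arcsine distribution in the introduction.
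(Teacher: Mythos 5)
Your proposal is correct and follows essentially the same route as the paper: split the integral at $\overline{r}$, reflect $(x-\overline{r})^k=(-1)^k(\overline{r}-x)^k$ on the left half, and apply both identities of Theorem~\ref{2} with $t=k+1$ to get the factor $(-1)^k+1$. The bookkeeping of powers of $2$ and $r_2-r_1$ and the specialization to $s=\tfrac12$, $r_1=0$, $r_2=1$ also match the paper's computation.
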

\begin{proof}
\begin{align*}\mu_k=E(X-\mu)^k&=\int_{r_1}^{r_2}(x-\mu)^k\frac{{((x-r_1)(r_2-x))}^{s-1}}{(r_2-r_1)^{2s-1}\beta(s,s)}dx\\
&=\int_{r_1}^{r_2}(x-\overline{r})^k\frac{{((x-r_1)(r_2-x))}^{s-1}}{(r_2-r_1)^{2s-1}\beta(s,s)}dx\\
&=\frac{1}{(r_2-r_1)^{2s-1}\beta(s,s)}(\int_{r_1}^{\overline{r}}(x-\overline{r})^k{((x-r_1)(r_2-x))}^{s-1}dx\\
&+\int_{\overline{r}}^{r_2}(x-\overline{r})^k{((x-r_1)(r_2-x))}^{s-1}dx).\end{align*}
Using Theorem~\ref{2}, \begin{align*}\mu_k&=\frac{1}{(r_2-r_1)^{2s-1}\beta(s,s)}((-1)^k\int_{r_1}^{\overline{r}}(\overline{r}-x)^k{((x-r_1)(r_2-x))}^{s-1}dx\\
&+\int_{\overline{r}}^{r_2}(x-\overline{r})^k{((x-r_1)(r_2-x))}^{s-1}dx)\\&=\frac{((-1)^k+1)(r_2-r_1)^k\beta(s,\frac{k+1}{2})}{2^{2s+k}\beta(s,s)}.\end{align*}
In other words, for $n\in\textbf{N}$, we have $\mu_{2n-1}=0$ and $\mu_{2n}=
\frac{(r_2-r_1)^{2n}\beta(s,n+\frac{1}{2})}{2^{2s+2n-1}\beta(s,s)}$.

Consequently, If $X$ has arcsine probability distribution with bounded support then $\mu_{2n-1}=0$ and  using the fact that $\Gamma(n+\frac{1}{2})= \frac{(2n)!\sqrt{\pi}}{4^nn!} $ $$\mu_{2n}=\frac{(r_2-r_1)^{2n}\beta(\frac{1}{2},n+\frac{1}{2})}{4^{n}\beta(\frac{1}{2},\frac{1}{2})}
=\frac{(r_2-r_1)^{2n}(2n)!}{16^n(n!)^2}.$$\end{proof}
For the standard arcsine distribution, the central moments are $\mu_{2n-1}=0$ and $\mu_{2n}=\frac{(2n)!}{16^n(n!)^2}.$

 \end{document}